\renewcommand{\nrightarrow}{\not\rightarrow}
\newcommand{\oC}{\overline{C}}
\newcommand{\pt}{q}
\newcommand{\pp}{p}
\newcommand{\ho}{\mathcal{H}_0}
\newcommand{\hnp}{\mathcal{H}_{s}(n,p)}
\newcommand{\notRamsey}{\nrightarrow (K_{s},K_{k-1})}
\newcommand{\cA}{\mathcal{A}}
\newcommand{\cH}{\mathcal{H}}
\newcommand{\cE}{\mathcal{E}}
\newcommand{\cC}{\mathcal{C}}
\newcommand{\PP}[1]{\mathbb{P} \left[ #1 \right]}
\newcommand{\EE}[1]{\mathbb{E} \left[ #1 \right]}
\theoremstyle{plain}
\newtheorem{theorem}{Theorem}[section]
\crefname{theorem}{Theorem}{Theorems}
\crefname{proposition}{Proposition}{Propositions}
\crefname{corollary}{Corollary}{Corollaries}
\newtheorem{lemma}[theorem]{Lemma}
\crefname{lemma}{Lemma}{Lemmas}
\newtheorem{conjecture}[theorem]{Conjecture}
\crefname{conjecture}{Conjecture}{Conjectures}
\crefname{problem}{Problem}{Problem}
\newtheorem{claim}[theorem]{Claim}
\crefname{claim}{Claim}{Claims}
\crefname{observation}{Observation}{Observations}
\crefname{setup}{Setup}{Setups}
\crefname{myth}{Myth}{Myths}
\newtheorem{fact}[theorem]{Fact}
\crefname{fact}{Fact}{Facts}
\crefname{algorithm}{Algorithm}{Algorithms}
\crefname{remark}{Remark}{Remarks}
\crefname{example}{Example}{Examples}
\theoremstyle{definition}
\crefname{definition}{Definition}{Definitions}
\crefname{construction}{Construction}{Constructions}
\crefname{question}{Question}{Questions}
\numberwithin{equation}{section}
\setlist[enumerate,1]{label={\upshape (\roman*)}}
\let\epsilon=\varepsilon
\newenvironment{proofclaim}[1][Proof of the claim]{\begin{proof}[#1]}{\end{proof}}
\title{Graphs with asymmetric Ramsey properties}
\author{Walner Mendonça}
\email{walner@mat.ufc.br}
\address[UFC]{Departamento de Matemática, Universidade Federal
  do Ceará, Ceará, Brazil}
\author{Meysam Miralaei}
\email{m.miralaei@ime.usp.br}
\author{Guilherme O. Mota}
\email{mota@ime.usp.br}
\address[USP]{Instituto de Matem\'atica e Estat\'istica,
  Universidade de S\~ao Paulo, Rua do Mat\~ao 1010, 05508--090
  São Paulo, Brazil}
\thanks{This research was partly supported by CAPES (Finance Code 001).
    W.\ Mendonça was supported by CNPq (312935/2025-0), FAPESP (2023/07695-6),  and FAPESB (012/2022 - UNIVERSAL - APP0044/2023).
    M.\ Miralaei was supported by FAPESP (2023/04895-4).
    G.\ O.\ Mota was supported by CNPq (315916/2023-0 and 406248/2021-4) and FAPESP (2023/03167-5 and 2024/13859-4).
    }
\begin{document}
\onehalfspace

\begin{abstract}
  Given positive integers $k$ and $\ell$ we write
  $G \rightarrow (K_k,K_\ell)$ if every 2-colouring of the edges of
  $G$ yields a red copy of $K_k$ or a blue copy of $K_\ell$ and we
  denote by $R(k)$ the minimum $n$ such that
  $K_n\rightarrow (K_k,K_k)$. By using probabilistic methods and
  hypergraph containers we prove that for every integer $k \geq 3$,
  there exists a graph $G$ such that $G \nrightarrow (K_k,K_k)$ and
  $G \rightarrow (K_{R(k)-1},K_{k-1})$.  This result can be viewed as
  a variation of a classical theorem of Ne\v{s}et\v{r}il and R\"odl
  [The Ramsey property for graphs with forbidden complete subgraphs,
  {\em Journal of Combinatorial Theory, Series B}, \textbf{20} (1976),
  243–249], who proved that for every integer $k\geq 2$ there exists a
  graph $G$ with no copies of $K_k$ such that
  $G\rightarrow(K_{k-1}, K_{k-1})$.
\end{abstract}

\maketitle

\section{Introduction}
Given positive integers $k$ and $\ell$, we say a graph $G$ is
\emph{Ramsey} for $(K_k,K_\ell)$ if every colouring of the edges of
$G$ with red and blue contains a red copy of $K_k$ or a blue copy of
$K_\ell$ and we denote this property by $G \rightarrow (K_k,K_\ell)$.
In a seminal work~\cite{Ramsey}, Ramsey proved that for all positive
integers $k$ and $\ell$, there exists a positive integer $n$ such that
$K_n \rightarrow (K_k, K_{\ell})$.  In the special case $k=\ell$, we
simply write $G \rightarrow K_k$ and we define the \emph{Ramsey
  number} $R(k)$ as the minimum $n$ such that $K_n \rightarrow K_k$.

Estimating $R(k)$ has proven to be notoriously difficult and a central
problem in Ramsey theory is to determine the Ramsey number $R(k)$.
Classical results due to Erd\H{o}s~\cite{Erd} and
Erd\H{o}s–-Szekeres~\cite{Erd.Szek} established the bounds
$2^{k/2} \leq R(k) \leq 2^{2k}$.  Despite several refinements, these
exponents remained essentially unchanged for decades. Only in recent
years, significant breakthroughs have been achieved (see,
e.g.,~\cite{Conlon, ConlonFoxSudakov2015, Sah2023}). In a striking
advance, Campos, Griffiths, Morris, and Sahasrabudhe~\cite{CGMS}
proved that there exists an $\varepsilon > 0$ such that
$R(k)\leqslant (4-\varepsilon)^k$ for sufficiently large $k$.  This
result provides an exponential improvement over the classical
Erd\H{o}s–-Szekeres upper bound. More generally, one can think of the
minimum $n$ such that $K_n\rightarrow (K_k,K_\ell)$, for which there
was another major breakthrough recently by Mattheus and
Verstraete~\cite{MV}, who showed that
$n=\Omega\big(t^3/(\log^4 t)\big)$ vertices are enough to force red
copies of $K_4$ or blue copies of $K_t$ in red-blue colourings of the
edges of $K_n$.

Although much effort has been put into estimating Ramsey numbers, a
parallel and rich direction of research investigates the structure of
graphs that are Ramsey for given pairs of graphs. In this context, we
study Ramsey phenomena of the form $G \rightarrow (K_{s},K_{t})$.

A classical result of  Ne\v{s}et\v{r}il and R\"odl~\cite{NR} shows
that for every $k\geq 2$ there are graphs with no copies of $K_k$
that are Ramsey for $K_{k-1}$.
\begin{theorem}[Ne{\v{s}}et{\v{r}}il \& R{\"o}dl, 1976]\label{thm_NR}
	For every $k\geq 2$ there is a graph $G$ such that
	$K_k \nsubseteq G$ and $G\rightarrow K_{k-1}$.
\end{theorem}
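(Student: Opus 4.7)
The plan is to prove \cref{thm_NR} via the probabilistic method, using a robust (deletion-stable) version of the Rödl--Ruci\'nski random Ramsey theorem — the kind of statement that the hypergraph container method naturally delivers. The cases $k \in \{2,3\}$ are trivial (a single edge already witnesses $G \rightarrow K_2$), so I assume $k \geq 4$.

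First, I would take $G = G(n,p)$ to be the binomial random graph on $n$ vertices with $p = C n^{-2/k}$, where $C = C(k)$ is a large constant; this sits just above the Rödl--Ruci\'nski threshold for $K_{k-1}$, since $1/m_2(K_{k-1}) = 2/k$. The key property I would need is not merely the arrowing $G \rightarrow K_{k-1}$ w.h.p., but the following \emph{robust} version: there exists $\alpha = \alpha(k) > 0$ such that w.h.p.\ every spanning subgraph $G' \subseteq G$ with $e(G') \geq (1-\alpha)\, e(G)$ still satisfies $G' \rightarrow K_{k-1}$. Such a statement can be established by applying the hypergraph container method to the hypergraph whose vertices are the edges of $K_n$ and whose edges encode the $K_{k-1}$-free red/blue $2$-colourings.

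Second, the deletion step. The expected number of copies of $K_k$ in $G$ is
\[
    \binom{n}{k}\, p^{\binom{k}{2}} \;=\; O\!\left(n^k \cdot n^{-(k-1)}\right) \;=\; O(n),
\]
so by Markov's inequality w.h.p.\ $G$ contains at most $n \log n$ copies of $K_k$. Since $e(G) = \Theta(n^{2-2/k})$ and $k \geq 3$, removing one edge from each $K_k$ deletes at most $o(e(G))$ edges, hence fewer than $\alpha\, e(G)$. The resulting graph $G^*$ is $K_k$-free by construction and, by the robustness property from the previous step, still satisfies $G^* \rightarrow K_{k-1}$, completing the proof.

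The hardest part is the robust Ramsey statement: the bare arrowing $G(n,p) \rightarrow K_{k-1}$ is not automatically stable under arbitrary edge deletions, so one must simultaneously control all sufficiently dense subgraphs of $G(n,p)$. The container method is tailored precisely for this obstacle, enumerating the $K_{k-1}$-free colourings of $E(K_n)$ by a small family of containers and then using this enumeration to show that the probability that some dense subgraph of $G(n,p)$ admits a valid $K_{k-1}$-free $2$-colouring is negligible. Everything else — the choice of $p$, the $K_k$-count by first moment, and the final edge-removal — is routine once this robustness is in hand.
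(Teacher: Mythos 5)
Your proposal is mathematically sound, but it does not match what the paper does for this statement: the paper does not prove Theorem~\ref{thm_NR} at all---it simply cites Ne\v{s}et\v{r}il and R\"odl's 1976 paper, whose original argument is a deterministic, combinatorial (partite amalgamation) construction, not a probabilistic one. What you have written is instead a probabilistic proof in the spirit of R\"odl--Ruci\'nski and of the ``robust'' container-based random Ramsey theorem (cf.\ Nenadov--Steger, cited here as~\cite{NS16}), combined with the deletion method.

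That said, your route is a close relative of the strategy the paper \emph{does} use to prove its own main result, Theorem~\ref{thm:main}, which strengthens Theorem~\ref{thm_NR}. You work with $G(n,p)$ at $p = Cn^{-2/k}$ (correctly noting $1/m_2(K_{k-1}) = 2/k$), invoke a robustness statement saying that every subgraph of $G(n,p)$ retaining a $(1-\alpha)$-fraction of the edges still arrows $K_{k-1}$, observe that the expected $K_k$-count is $O(n) \ll e(G) = \Theta(n^{2-2/k})$, and delete one edge per $K_k$. The paper's Theorem~\ref{thm:part1} is precisely the analogue of your robustness lemma, lifted from $G(n,p)$ to the primal graph of a random $s$-uniform hypergraph $\cH_s(n,p)$ with $s = R(k)-1$; and its Theorem~\ref{thm:part2} plays the role of your deletion step, but has to do more work (making the hypergraph linear and $k$-conformal) because the paper needs the stronger conclusion $G \nrightarrow K_k$ rather than mere $K_k$-freeness. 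In other words, your argument is essentially the $s=2$ degeneration of the paper's machinery, and it suffices here exactly because $K_k \nsubseteq G$ is a much weaker requirement than $G \nrightarrow K_k$. The one point you gloss over is that the robust Ramsey statement for $G(n,p)$ is not a black-box citation of R\"odl--Ruci\'nski (which only gives the bare arrowing); one does need to re-run the container argument as you sketch, and the paper's proof of Theorem~\ref{thm:part1} shows exactly how to make the independence and union-bound calculations work, so this gap is real but entirely fillable by the paper's own techniques.
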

Our main result, Theorem~\ref{thm:main} below, can be seen as a
variation of Theorem~\ref{thm_NR}. We prove that for any $k\geq 3$
there exists a graph $G$ that is not Ramsey for $K_k$ but it is Ramsey
for the pair $(K_{s},K_{k-1})$, for $s=R(k)-1$, i.e., we replace the
condition $K_k \nsubseteq G$ in Theorem~\ref{thm_NR} with the weaker
condition $G\nrightarrow K_k$, which allows $G$ to contain copies of
$K_k$, but still there is a colouring of $E(G)$ avoiding monochromatic
copies of $K_k$; and we strengthen the conclusion
$G\rightarrow K_{k-1}$ by showing that
$G \rightarrow (K_{s},K_{k-1})$, for $s=R(k)-1$ (note that $s$ cannot
be any larger).
\begin{theorem}\label{thm:main}
	For every integer $k \ge 3$, there exists a graph $G$ such that $G
	\nrightarrow K_{k}$ and $G \rightarrow (K_{s},K_{k-1})$, for
  $s=R(k)-1$.
\end{theorem}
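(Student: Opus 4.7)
The plan is to obtain $G$ from a probabilistic construction and use the hypergraph container method to verify both Ramsey properties simultaneously. Write $s:=R(k)-1$; since $s<R(k)$ there is a 2-colouring $c^*\colon E(K_s)\to\{\text{red},\text{blue}\}$ with no monochromatic $K_k$, and this will serve as the \emph{blueprint} for a witness colouring of $G$. A natural candidate model is a random $s$-partite graph $G_{s,n,p}$ on vertex classes $V_1,\dots,V_s$ with $|V_i|=n$, in which each potential edge between distinct classes is included independently with probability $p=p(n)$, possibly augmented with a very sparse random set of within-part edges.

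For the condition $G\nrightarrow K_k$ I would exhibit, deterministically on top of any realisation of $G$, the colouring $\tilde c$ that lifts $c^*$: for a cross-part edge $uv$ with $u\in V_i$, $v\in V_j$, set $\tilde c(uv):=c^*(\{i,j\})$ and colour the within-part edges arbitrarily (say, all red). Provided the within-part structure is $K_{k-1}$-free, every copy of $K_k$ in $G$ must use at most one vertex from each part, hence project onto a $K_k$ in $K_s$, whose colouring is non-monochromatic by the choice of $c^*$. This gives $G\nrightarrow K_k$ independently of the random choice of $G$.

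For the condition $G\to(K_s,K_{k-1})$ I would use hypergraph containers. The family of 2-colourings whose red part is $K_s$-free and whose blue part is $K_{k-1}$-free can be controlled by a modest family $\mathcal{F}$ of container pairs $(\mathcal{C}_R,\mathcal{C}_B)$, via the Saxton--Thomason / Balogh--Morris--Samotij theorems applied to both $K_s$-free and $K_{k-1}$-free graphs: every such colouring of $E(G)$ satisfies $E(G)\subseteq\mathcal{C}_R\cup\mathcal{C}_B$ for some pair in $\mathcal{F}$. For $p$ above an appropriate partite analogue of the asymmetric Ramsey threshold $n^{-1/m_2(K_s,K_{k-1})}$, the probability that a fixed container pair covers $E(G)$ decays geometrically in the number of uncovered edges, and the total count of container pairs is subexponential by the container theorems. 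A union bound then yields $G\to(K_s,K_{k-1})$ with high probability.

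The hard part is reconciling the two requirements: in the ordinary $G(n,p)$ model the asymmetric threshold for $(K_s,K_{k-1})$ lies strictly above the R\"odl--Ruci\'nski threshold $n^{-2/(k+1)}$ for symmetric $K_k$-Ramsey, so a sufficiently dense $G(n,p)$ would satisfy $G\to K_k$ and destroy property~A. Passing to the partite blueprint model is what breaks this tension, because $\tilde c$ certifies $G\nrightarrow K_k$ for every value of $p$, freeing $p$ to sit at the asymmetric threshold needed for property~B. The remaining technical heart of the argument will be (i)~proving the container/union-bound estimate in the partite setting, and (ii)~ruling out the degenerate ``obstruction'' colourings—such as colouring all $V_1$--$V_2$ edges blue to kill every transversal $K_s$—which is exactly where the structure of $c^*$ together with the random cross-part edges (and the carefully added within-part edges) must be used, so that no partition of $E(G)$ into a $K_s$-free red graph and a $K_{k-1}$-free blue graph survives. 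Once both probabilistic statements are established, some realisation of the model supplies the desired $G$.
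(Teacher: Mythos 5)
Your high-level intuition is right and matches the paper's at its core: since $s = R(k)-1$, fix a $K_k$-free red/blue colouring $c^*$ of $K_s$ and lift it to a host graph in which every $K_k$ is ``captured'' by a copy of $K_s$, while using containers to force $(K_s,K_{k-1})$-Ramseyness. You also correctly identify that a single density scale in $G(n,p)$ cannot serve both goals, so a different model is required. But the concrete model you propose — a random $s$-partite graph with a sparse sprinkling of within-part edges — has a gap that I do not think is patchable within that model, and the paper's construction is materially different precisely in order to sidestep it.

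First, the claim ``provided the within-part structure is $K_{k-1}$-free, every copy of $K_k$ in $G$ must use at most one vertex from each part'' is false for $k\geq 4$. A $K_k$ may take two adjacent vertices $u,u'\in V_i$ and two adjacent vertices $v,v'\in V_j$; the within-part graph then only contains single edges, which is compatible with $K_{k-1}$-freeness. Once those within-part edges exist, the lifted colouring $\tilde c$ can no longer be $K_k$-free: if $c^*(\{i,j\})$ is red and the within-part edges are red, then $\{u,u',v,v'\}$ is a red $K_4$; if the within-part edges are blue and $c^*(\{i,j\})$ is blue, you get a blue $K_4$. So the witness colouring fails as soon as even a single within-part edge is present in two distinct parts. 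But if you remove all within-part edges, you are stuck with the genuine obstruction you yourself flag: colour every $V_1$--$V_2$ edge blue and everything else red. The red graph contains no transversal $K_s$, and the blue graph is bipartite, hence $K_3$-free, hence $K_{k-1}$-free for all $k\geq 4$. That single colouring kills $G\to(K_s,K_{k-1})$ outright, for every value of $p$. So you cannot have it both ways in the $s$-partite model: with within-part edges the $K_k$-avoidance breaks, without them the Ramsey property breaks, and your sketch defers exactly this reconciliation to the ``remaining technical heart.''

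The paper escapes the tension by decoupling the blueprint from a global partition. Rather than one $s$-partition with a single lifted colouring, take the primal graph $G[\ho]$ of a random $s$-uniform hypergraph $\ho\subseteq\cH_s(n,p)$: each hyperedge spawns a full $K_s$, and the hyperedges overlap in a controlled way. Two pruning steps (Theorem~\ref{k-pair-Helly}) make $\ho$ \emph{linear} (any two hyperedges share at most one vertex, so every edge of $G[\ho]$ lies in a unique hyperedge and the colouring of each $K_s$ by $c^*$ is well-defined) and \emph{$k$-conformal} (every $K_k$ of $G[\ho]$ is contained in a single hyperedge, hence inherits the nonmonochromatic colouring of that $K_s$). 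This achieves $G[\ho]\nrightarrow K_k$ with the same blueprint idea as yours, but localised hyperedge-by-hyperedge, so the obstruction colouring no longer exists: there is no distinguished pair of ``parts'' whose cross-edges one can dump into one colour class. For the Ramsey direction, the paper applies containers only to $K_{k-1}$-free graphs (not to both families) and pairs this with supersaturation (Fact~\ref{thm_RS}) to show that outside each container there are many $K_s$'s; then a counting argument over the random hyperedges shows that a $(1-o(1))$-fraction subhypergraph cannot avoid them all. The admissible window for $p$, namely $n^{2-s-1/m_2(K_{k-1})}\ll p\ll n^{2-s-1/m_2(K_k)}$, is nonempty precisely because $m_2(K_{k-1})<m_2(K_k)$, which is the place where the hypergraph model genuinely buys you what the partite model cannot. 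As written, your proposal is a sketch of a different construction whose core deterministic step fails, and whose stated repair (sparse within-part edges plus ``careful'' colouring) is what actually needs a proof.
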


We remark that Theorem~\ref{thm:main} also relates to the theory of
Ramsey equivalence. Szab\'o, Zumstein, and Z\"urcher \cite{Szabo}
introduced the notion of \emph{Ramsey-equivalent graphs}: two graphs
$H_1$ and $H_2$ are {Ramsey-equivalent} if for every graph $G$, we
have $G \rightarrow H_1$ if and only if $G \rightarrow H_2$
(see~\cite{Bloom, Fox}) for results on Ramsey equivalence). More
generally, two pairs of graphs $(F_1, H_1)$ and $(F_2, H_2)$ are
{Ramsey-equivalent} if for every graph $G$ we have
$G \rightarrow (F_1, H_1)$ if and only if $G \rightarrow (F_2, H_2)$.
In other words, the two pairs share exactly the same family of Ramsey
graphs. In this direction, our result implies that the pairs $(K_k,K_k)$
and $(K_{s},K_{k-1})$ for any $s\leq R(k)-1$ are not
Ramsey-equivalent.

The proof of Theorem~\ref{thm:main} combines probabilistic methods
with the hypergraph container
framework~\cite{balogh2015independent,SaxtonThomason2015} and is
inspired by ideas from~\cite{bollobas2001ramsey}. The rest of the
paper is organized as follows. In Section~\ref{sec:mono}, we show that
with high probability\footnote{Meaning with probability going to $1$
  as $n$ tends to infinity.} the graph $G$ obtained in a natural way from
every ``dense'' subhypergraph of a suitable $n$-vertex random
$s$-uniform hypergraph satisfies $G\rightarrow(K_{s},K_{k-1})$ for
$s = R(k)-1$. In Section~\ref{sec:hyper}, we show that with high
probability a suitable random hypergraph $\cH$ contains a dense
subhypergraph $\cH_0$ that will allow us to obtain a graph $G$ such
that $G\nrightarrow K_k$. These results are then combined in
Section~\ref{sec:proof}. Finally, in Section~\ref{sec:conc}, we
outline some directions for future research.

\section{Graphs induced by random hypergraphs}
\label{sec:mono}
In the remainder of the paper, we fix a positive integer $k \geq 3$
and put $s=R(k)-1$. In this section, we prove that suitable random
$s$-uniform hypergraphs induce a graph with Ramsey properties with
respect to $(K_s,K_{k-1})$, but before presenting this result we
briefly discuss the hypergraph container lemma and state some simple
facts that will be useful when analysing our construction.

\subsection{Hypergraph Containers and tools}

An important parameter in our analysis, which is also common in many
results in Ramsey Theory when describing Ramsey properties in random
graphs, is the \emph{maximum $2$-density} of a graph~$F$, defined as
\begin{equation*}
  m_2(F)=\max\left\{\frac{e(J)-1}{v(J)-2} \;:\; J\subset
    F,\;v(J)\geq 3\right\},
\end{equation*}
where $e(J)$ and $v(J)$ denote the number of edges and vertices of $J$, respectively.
We use the hypergraph container
lemma~\cite{balogh2015independent,SaxtonThomason2015} stated as in~\cite{NS16}
to obtain a set of \emph{containers} $C_i$ and their corresponding
\emph{sources} $S_i$.

\begin{lemma}[Container Lemma]
  \label{containers}
  For every graph $F$ and every $\delta > 0$, there exist $n_0$ and $D > 0$
  such that for all $n \geq n_0$ there exists $t=t(n)$ such that the
  following holds: there are pairwise distinct subsets
  $S_1,\dots,S_t \subseteq E(K_n)$ and
  $C_1,\dots,C_t \subseteq E(K_n)$ such that
  \begin{enumerate}
    \item \label{eq:cont1}$|S_i| \leq D n^{2 - 1/m_2(F)}$ for all $i$;
    \item \label{eq:cont2}each $C_i$ contains at most $\delta n^{v(F)}$ copies of $F$;
    \item \label{eq:cont3}for every $F$-free graph $G$ with \(n\) vertices, there exists $i$ such that $S_i \subseteq E(G) \subseteq C_i$.
  \end{enumerate}
\end{lemma}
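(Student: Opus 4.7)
The plan is to derive this statement by specializing the general hypergraph container theorem of Balogh--Morris--Samotij and Saxton--Thomason to an auxiliary hypergraph. Let $\cH$ be the hypergraph with vertex set $E(K_n)$ whose hyperedges are the edge sets of copies of $F$ in $K_n$. Then $\cH$ is $e(F)$-uniform on $\binom{n}{2}$ vertices with $\Theta(n^{v(F)})$ hyperedges, and the $F$-free subgraphs of $K_n$ are precisely the independent sets of $\cH$. Condition~\eqref{eq:cont3} is then exactly what the container theorem produces, once the quantitative bounds~\eqref{eq:cont1} and~\eqref{eq:cont2} are verified.

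To invoke the container theorem, I would estimate the $j$-degrees of $\cH$ for each $1 \leq j \leq e(F)$. For a set $A$ of $j$ edges of $K_n$ spanning $v_A$ vertices, the number of copies of $F$ in $K_n$ containing $A$ is $O(n^{v(F) - v_A})$, with the hidden constant depending only on $F$. Taking the worst case over $j$-edge subgraphs $J \subseteq F$ yields $\Delta_j(\cH) = O(n^{v(F) - v(J)})$, where $J$ minimizes $v(J)$ subject to $e(J)=j$. These are the degree estimates needed to apply the container theorem and to control the size of the sources it outputs.

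Feeding these bounds into the container theorem, one obtains sources $S_i$ with $|S_i| = O(n^2/\tau)$, where the scale $\tau$ is determined by balancing the degree estimates across all $j$. The standard optimization selects the subgraph $J \subseteq F$ maximizing $(e(J)-1)/(v(J)-2)$, which is by definition $m_2(F)$, and yields $\tau = \Theta(n^{1/m_2(F)})$, giving~\eqref{eq:cont1}. Condition~\eqref{eq:cont2} is then automatic from the conclusion of the container theorem that $e(\cH[C_i]) \leq \delta' e(\cH)$ for any prescribed $\delta' > 0$; since $e(\cH) = \Theta(n^{v(F)})$, choosing $\delta'$ small enough translates this into at most $\delta n^{v(F)}$ copies of $F$ inside $C_i$. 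The main technical obstacle, were one to prove the container theorem from scratch, would be the iterative pruning algorithm that builds the pairs $(S_i, C_i)$; here it is used as a black box, and the remaining substantive work is the degree-counting step and the recognition of $m_2(F)$ as the exponent coming out of the balancing.
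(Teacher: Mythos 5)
The paper does not prove this lemma; it quotes it directly from the literature, citing Nenadov and Steger~\cite{NS16} for this precise formulation and Balogh--Morris--Samotij and Saxton--Thomason for the underlying general theorem. Your derivation --- building the $e(F)$-uniform hypergraph on vertex set $E(K_n)$ whose hyperedges are edge sets of copies of $F$, bounding $\Delta_j$ by the worst $j$-edge subgraph $J\subseteq F$, and balancing to obtain $\tau$ so that the source-size bound becomes $O(n^{2-1/m_2(F)})$ --- is exactly the standard argument carried out in that reference, and it is correct.
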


In the proof of Theorem~\ref{thm:main} we apply Lemma \ref{containers}
together with the following simple supersaturation result (see,
e.g.,~\cite{NS16}) that guarantees many red copies of $K_{s}$ or many
blue copies of $K_{k-1}$ when colouring the edges of a sufficiently
large complete graph (with at least $R(s, k-1)$ vertices).

\begin{fact}
  \label{thm_RS}
  For all integers $s > k \geq 2$ there exists $\delta >0$ such that
  the following holds for sufficiently large $n$. Every red-blue
  colouring of the edges of $K_n$ contains more than $\delta n^{s}$
  red copies of $K_{s}$ or more than $\delta n^{k-1}$ blue copies of
  $K_{k-1}$.
\end{fact}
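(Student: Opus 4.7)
The plan is a standard averaging argument using Ramsey's theorem, executed as follows. Set $N := R(s,k-1)$ and consider an arbitrary red-blue colouring of $E(K_n)$, with $n \geq N$. For every $N$-element subset $U$ of $V(K_n)$, the definition of $R(s,k-1)$ guarantees that the induced colouring on $U$ contains either a red $K_s$ or a blue $K_{k-1}$. Let $A$ (resp.\ $B$) denote the number of $N$-subsets $U$ that span at least one red $K_s$ (resp.\ blue $K_{k-1}$). Then $A+B \geq \binom{n}{N}$, so at least one of $A, B$ is at least $\tfrac{1}{2}\binom{n}{N}$; the two cases will be handled symmetrically.

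Next I would apply a standard double-counting identity. Each red copy of $K_s$ is contained in exactly $\binom{n-s}{N-s}$ different $N$-subsets, and each blue copy of $K_{k-1}$ is contained in exactly $\binom{n-(k-1)}{N-(k-1)}$ of them. Consequently, if $A \geq \tfrac{1}{2}\binom{n}{N}$, then the number of red copies of $K_s$ is at least
\[
  \frac{\binom{n}{N}}{2\binom{n-s}{N-s}} \;=\; \frac{\binom{n}{s}}{2\binom{N}{s}},
\]
which exceeds $\delta n^s$ for any sufficiently small $\delta=\delta(s,k) > 0$ once $n$ is large enough. The same computation applied to $B$ yields more than $\delta n^{k-1}$ blue copies of $K_{k-1}$. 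Taking $\delta$ to be the minimum of the two constants will complete the argument.

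I do not anticipate any genuine obstacle: this is the classical supersaturation-via-sampling argument. The only points requiring minor care are to take $n \geq R(s,k-1)$ so that the Ramsey guarantee applies inside every $N$-subset, and to use the asymptotic $\binom{n}{j} = (1+o(1))\, n^j/j!$ in order to absorb the factors $\binom{N}{s}$ and $\binom{N}{k-1}$ into $\delta$. The hypothesis $s > k$ plays no role beyond ensuring that $K_s$ and $K_{k-1}$ are sensible complete graphs and that $R(s,k-1)$ is finite.
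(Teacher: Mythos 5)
Your proof is correct, and it is the standard averaging/double-counting supersaturation argument. The paper itself does not give a proof of Fact~\ref{thm_RS}; it simply cites a reference (``see, e.g.,~\cite{NS16}''), and the argument found there is essentially identical to yours: sample an $N$-subset with $N = R(s,k-1)$, use Ramsey's theorem to force a red $K_s$ or blue $K_{k-1}$ inside it, split the $N$-subsets into the two types, and double-count. Your identity $\binom{n}{N}/\binom{n-s}{N-s} = \binom{n}{s}/\binom{N}{s}$ is verified easily by expanding factorials, and absorbing the resulting constants into $\delta$ using $\binom{n}{j} = (1+o(1))n^j/j!$ is exactly the right move. So you have, in effect, reconstructed the proof the authors chose to omit; there is no gap and no divergence in method.
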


Let $\cH$ be a hypergraph and let $J$ be a graph with
$V(J)\subseteq V(\cH)$ and $E(J) = \{e_1,\ldots,e_m\}$.
We write \(J \sqsubset \cH\) if $\cH$ contains
distinct hyperedges \(E_1,\ldots,E_m \in E(\cH)\) such that
\(e_i \subseteq E_i\) for every \(i\in [m]\).  Given positive integers
$n$ and $k$ and a probability function $\pp=\pp(n)$, the \emph{random
  $s$-uniform hypergraph} $\hnp$ is the $n$-vertex $s$-uniform
hypergraph obtained by adding any possible hyperedge with $s$ vertices
independently with probability $\pp$. The following fact follows from
Markov's inequality.

\begin{fact}\label{lem:quasi}
  Let $k\geq 3$ be an integer and let $\pp\in(0,1)$ and $\cH=\hnp$.
Then, for every graph \(J\) with \(V(J) \subseteq V(\cH)\), we have
$\mathbb{P}[J \sqsubset \cH] \leq \pt^{e(J)}$, where $\pt = \pp \binom{n-2}{s-2}$.
\end{fact}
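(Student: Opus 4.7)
The plan is to prove \cref{lem:quasi} by a first-moment (Markov) argument applied to the number of ordered tuples of distinct hyperedges that witness \(J \sqsubset \cH\).

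First I would introduce a counting random variable: let $m = e(J)$ and enumerate the edges as $e_1, \ldots, e_m$. Let $Z$ be the number of ordered $m$-tuples $(E_1, \ldots, E_m)$ of pairwise distinct hyperedges of $\cH$ such that $e_i \subseteq E_i$ for every $i \in [m]$. By definition, $J \sqsubset \cH$ if and only if $Z \geq 1$, so by Markov's inequality $\mathbb{P}[J \sqsubset \cH] \leq \mathbb{E}[Z]$.

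Next I would compute $\mathbb{E}[Z]$ by linearity. Write it as a sum over admissible tuples $(E_1, \ldots, E_m)$ of pairwise distinct $s$-subsets of $V(\cH)$ with $e_i \subseteq E_i$, of the probability that all of them lie in $\cH$. Here the distinctness of the $E_i$ is the crucial point: because the indicator variables of distinct potential hyperedges in $\mathcal{H}_s(n,p)$ are independent, this joint probability is exactly $p^m$. The number of ways of choosing a single $E_i \supseteq e_i$ is the number of $s$-subsets of $V(\cH)$ containing the pair $e_i$, namely $\binom{n-2}{s-2}$. Dropping the distinctness constraint in the count only increases the number of tuples, so
\begin{equation*}
\mathbb{E}[Z] \;\leq\; \binom{n-2}{s-2}^{m} \cdot p^{m} \;=\; \left(p\binom{n-2}{s-2}\right)^{e(J)} \;=\; q^{e(J)},
\end{equation*}
which yields the claim.

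The only subtle point, and the main (mild) obstacle, is the insistence on distinct hyperedges $E_1, \ldots, E_m$: without this, the edge events for the various $E_i$ would not be independent (e.g.\ a single hyperedge could simultaneously cover several $e_i$), and the factorisation $\prod_i p = p^m$ of the joint probability would fail. Distinctness restores independence and makes the first-moment bound essentially trivial; it also aligns with the definition of $J \sqsubset \cH$ used in the paper.
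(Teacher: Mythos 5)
Your proof is correct and follows essentially the same route as the paper: define the counting random variable for ordered tuples of distinct hyperedges witnessing $J \sqsubset \cH$, apply Markov's inequality, and bound the expectation by $\binom{n-2}{s-2}^{e(J)}\pp^{e(J)}$. Your additional remark on why distinctness of the $E_i$ is needed for the product formula is a sound and useful elaboration, but it is not a different method.
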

\begin{proof}
  Let \(E(J) = \{e_1,\ldots,e_m\}\) and let \(X\) be the number of
  \(m\)-tuples \((E_1,\ldots,E_m)\) consisting of $m$ distinct
  hyperedges of \(\cH\) such that \(e_i \subseteq E_i\) for each
  \(i \in [m]\).  Therefore, by Markov's inequality, we have 
   $\PP{J \sqsubset \cH} = \PP{X \geq 1} \leq \EE{X} \leq
   \binom{n-2}{s-2}^{m} \pp^m = \pt^{e(J)}$.
 \end{proof}

\subsection{Graphs Ramsey for ($K_{R(k)-1}$, $K_{k-1}$)}

Recall that $s = R(k)-1$ and consider an $s$-uniform hypergraph
$\cH$. We define the \emph{primal graph} $G[\cH]$ of $\cH$ as the
graph on the same vertex set as $\cH$ and edge set $E(G[\cH])$
consisting of all pairs of vertices that appear together in the same
hyperedge of $\cH$. The following theorem is the main result of this
section.

\begin{theorem}
  \label{thm:part1}
  For all integers $s\geq 2$ and $k\geq 3$, there exists \(C>0\) such that the
  following holds with high probability for $\cH = \hnp$ when
  $\pp \geq C n^{2-s-1/m_2(K_{k-1})}$. For every subhypergraph
  $\ho \subseteq \cH$ with at least $(1 - o(1))e(\cH)$ hyperedges, we
  have
  \[
  G[\ho] \rightarrow (K_{s}, K_{k-1}).
  \]
\end{theorem}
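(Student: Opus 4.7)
The plan is to apply the hypergraph container lemma to the blue subgraph and combine it with a Chernoff-type concentration argument on $\cH=\hnp$. Suppose for contradiction that with non-negligible probability there is some $\ho\subseteq\cH$ with $e(\ho)\geq(1-o(1))e(\cH)$ admitting a red-blue colouring of $G[\ho]$ with no red $K_s$ and no blue $K_{k-1}$, and let $B$ be the $K_{k-1}$-free blue subgraph. Choose $\delta$ smaller than the constant $\delta^{*}$ from Fact~\ref{thm_RS} applied with parameters $s$ and $k-1$ (valid because $s=R(k)-1>k-1$), and apply Lemma~\ref{containers} to $F=K_{k-1}$ with this $\delta$, obtaining containers $(S_i,C_i)$ such that $S_i\subseteq B\subseteq C_i$ for some index $i$, with $|S_i|\leq Dn^{2-1/m_2(K_{k-1})}$ and at most $\delta n^{k-1}$ copies of $K_{k-1}$ inside $C_i$.

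The structural heart of the argument is the following. Treating $C_i$ as a blue colouring of $E(K_n)$ and its complement as red, the bound on $K_{k-1}$-copies in $C_i$ combined with Fact~\ref{thm_RS} forces a family $\cK_i$ of more than $\delta^{*}n^{s}$ copies of $K_s$ in $\overline{C_i}$. Any $s$-set $E\in\cK_i$ has all its $\binom{s}{2}$ pairs outside $C_i\supseteq B$, so if $E\in E(\ho)$ then those pairs form a red $K_s$ in $G[\ho]$, a contradiction. Hence $\cK_i\cap E(\ho)=\emptyset$, which yields
\[
|\cK_i\cap E(\cH)| \;\leq\; e(\cH)-e(\ho) \;=\; o(n^{s}\pp).
\]

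For each container $(S_i,C_i)$, the bad event on $\hnp$ is the conjunction of (i) $S_i\subseteq E(G[\cH])$, and (ii) $|\cK_i\cap E(\cH)|=o(n^{s}\pp)$. The pivotal point is that (i) and (ii) are \emph{independent}: the potential hyperedges relevant to (i) are those containing some edge of $S_i\subseteq C_i$, while those relevant to (ii) are $s$-sets in $\cK_i\subseteq\overline{C_i}$ (which contain no edge of $C_i$), and these two families of $s$-sets are disjoint; since $\hnp$ includes each $s$-set independently, the two events are independent. For (ii), as the expectation $\pp|\cK_i|$ is $\Omega(n^{s}\pp)$, a Chernoff bound gives probability at most $\exp(-\Omega(n^{s}\pp))=\exp(-\Omega(n^{2-1/m_2(K_{k-1})}))$. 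For (i), we pass to a subset $S_i'\subseteq S_i$ of size $\geq|S_i|/\binom{s}{2}$ with distinct covering hyperedges (such $S_i'$ always exists deterministically whenever $S_i\subseteq E(G[\cH])$, by a greedy argument), so that Fact~\ref{lem:quasi} bounds the probability by $\pt^{|S_i'|}$ with $\pt=\pp\binom{n-2}{s-2}\leq C'n^{-1/m_2(K_{k-1})}$. A union bound over the $t\leq\exp(O(n^{2-1/m_2(K_{k-1})}\log n))$ containers, together with these two estimates, gives total probability $o(1)$ once $C$ is sufficiently large.

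The main obstacle I anticipate is the passage to a well-separated source $S_i'$ in (i): since $S_i\subseteq E(G[\cH])$ does not immediately yield $S_i\sqsubset\cH$, one must argue greedily that a $\binom{s}{2}$-fold loss suffices to find distinct covering hyperedges, and verify that the resulting exponent $|S_i|/\binom{s}{2}$ in Fact~\ref{lem:quasi}—after an additional $2^{|S_i|}$ overhead for choosing the subset—is still large enough to dominate both the $\log n$ factor from the number of containers and the $\log C'$ factor from $\pt$. The remaining work is routine matching of exponents against the hypothesis $\pp\geq Cn^{2-s-1/m_2(K_{k-1})}$.
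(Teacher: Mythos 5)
Your high-level plan (containers for $K_{k-1}$-free graphs, supersaturation to get many $K_s$'s in $\overline{C_i}$, independence, Chernoff) matches the paper's, and your independence argument is correct. However, the step you flag as the ``main obstacle'' is in fact a genuine gap, and the greedy fix you sketch does not close it.

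The problem is that you apply the container lemma to the \emph{entire} blue subgraph $B$. Then $S_i\subseteq B\subseteq E(G[\cH])$ tells you only that each edge of $S_i$ is covered by \emph{some} hyperedge of $\cH$, not that distinct edges have distinct covering hyperedges. Your greedy repair extracts a sub-source $S_i'\subseteq S_i$ with $|S_i'|\geq |S_i|/\binom{s}{2}$ and $S_i'\sqsubset\cH$, at the cost of a $\binom{|S_i|}{|S_i'|}\leq 2^{|S_i|}$ union bound. This loss is fatal quantitatively. Summing over distinct sources of size $\ell\leq m=Dn^{2-1/m_2(K_{k-1})}$, the relevant quantity is
\[
\sum_{\ell\leq m}\binom{n^2}{\ell}\,2^{\ell}\,\pt^{\ell/\binom{s}{2}}\leq \sum_{\ell\leq m}\left(\frac{2en^2}{\ell}\,\pt^{1/\binom{s}{2}}\right)^{\ell}.
\]
At $\ell=m$ and $\pp=\Theta(n^{2-s-1/m_2(K_{k-1})})$ we have $\pt=\Theta(n^{-1/m_2(K_{k-1})})$, so the base is $\Theta\!\left(n^{\,(1-1/\binom{s}{2})/m_2(K_{k-1})}\right)$, which is polynomially growing in $n$ (because $\binom{s}{2}\geq 2$). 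Hence the source-sum is $\exp\!\big(\Theta(n^{2-1/m_2(K_{k-1})}\log n)\big)$, while the Chernoff factor from event (ii) is only $\exp\!\big(-\Theta(n^{2-1/m_2(K_{k-1})})\big)$; the $\log n$ in the exponent cannot be beaten by any choice of $C$. By contrast, when the source probability has exponent $|S_i|$ (rather than $|S_i|/\binom{s}{2}$), the factor $\pt^{\ell}$ exactly cancels the $\binom{n^2}{\ell}$ up to a constant base, so the source-sum is only $\exp(O(n^{2-1/m_2(K_{k-1})}))$ and the Chernoff wins once $C$ is large.

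The paper sidesteps the covering issue entirely: instead of feeding $B$ into the container lemma, it picks \emph{one} blue edge $e_A$ from each hyperedge $A\in E(\ho)$ and forms the spanning subgraph $G_0$ with $E(G_0)=\{e_A:A\in E(\ho)\}$. Then $G_0\subseteq B$ is still $K_{k-1}$-free (so containers apply to $G_0$ and give $S_i\subseteq E(G_0)\subseteq C_i$), and $G_0\sqsubset\ho$ holds \emph{by construction} because distinct edges of $G_0$ can be assigned distinct hyperedges. Consequently $S_i\sqsubset\cH$ with the full exponent $|S_i|$, no $\binom{s}{2}$-fold loss and no $2^{|S_i|}$ overhead. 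You should replace your greedy extraction with this ``one blue edge per hyperedge'' construction; the rest of your argument then goes through as in the paper.
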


\begin{proof}
  Let $\delta = \delta(s,k) > 0$ be given by Fact~\ref{thm_RS} and
  apply Lemma~\ref{containers} with $F = K_{k-1}$ and $\delta$ to
  obtain $D > 0$, an integer \(n_0\), a collection
  \(S_1,\ldots,S_t \subseteq E(K_n)\) of distinct sources and a
  collection \(C_1,\ldots,C_t\subseteq E(K_n)\) of containers.  Let
  $n$ be sufficiently large and consider
  $\oC_i = E(K_n) \setminus C_i$ for every $i\in [t]$.

  From Lemma~\ref{containers}\ref{eq:cont2}, each $C_i$ contains at
  most $\delta n^{k-1}$ copies of $K_{k-1}$ in \(K_n\), which by
  Fact~\ref{thm_RS} implies that the edges of more than $\delta n^{s}$
  copies of \(K_{s}\) are in $\oC_i$. For each $i\in[t]$, let $\cA_i$
  be the collection of the vertex set of those copies of $K_{s}$.

  Finally, let $C=C(D,\delta)$ be sufficiently large and
  $\pp \geq C n^{2-s-1/m_2(K_{k-1})}$ and let \(\cH = \hnp\).  We will
  show that the probability that there exists \(\ho \subseteq \cH\)
  with \(e(\ho) \geq (1-\delta)e(\cH)\) such that \(G[\ho]\notRamsey\)
  is sufficiently small for our purposes. In the following claim we
  reduce this event to another event which is entirely described in
  terms of the sources and the containers. For each
  \(i \in [t]\), let $X_i$ be the number of sets in the collection
  \(\cA_i\) which are hyperedges in \(\cH\), that is
  \[
    X_i = |\{ A \in \cA_i : A \in E(\cH)\}|.
  \]
  \begin{claim}
    If there exists a subhypergraph \(\ho\subseteq\cH\) with
    \(e(\ho) \geq (1-\delta)e(\cH)\) such that \(G[\ho]\notRamsey\),
    then for some \(i \in [t]\) we have \(X_i \leq \delta e(\cH)\)
    and \(S_i\sqsubset \cH\).
  \end{claim}

  \begin{proofclaim}
    Suppose that such a hypergraph \(\ho\) exists.  Then there is a
    red-blue colouring of the edges of $G=G[\ho]$ that contains no red
    copy of $K_{s}$ and no blue copy of $K_{k-1}$.  Since each
    hyperedge \(A \in E(\ho)\) gives us an $s$-clique in \(G\), there
    exists at least one blue edge \(e_A \in E(G)\) that lies in \(A\).
    Let \(G_0\subseteq G\) be the spanning subgraph of $G$ obtained by
    selecting one blue edge inside of each hyperedge of \(\ho\), that
    is, \(E(G_0) = \{e_A : A \in E(\ho)\}\).  Note that
    \(G_0 \sqsubset \ho\).  Now, we must have that \(G_0\) is
    $K_{k-1}$-free, otherwise we would have a blue copy of \(K_{k-1}\)
    in \(G\).

    By Lemma~\ref{containers}, we have
    $S_i \subseteq E(G_0) \subseteq C_i$, for some \(i \in [t]\).
    Furthermore, since all pairs of vertices in \(A \in \cA_i\) do not
    belong to \(C_i\), we cannot have any set in \(\cA_i\) as a
    hyperedge in \(\ho\).  Therefore, the number of sets in \(\cA_i\)
    that are hyperedges in \(\cH\) is at most
    \(|E(\cH)\setminus E(\ho)|\), which implies
    \(X_i \leq e(\cH)-e(\ho) \leq \delta e(\cH)\).  Finally, since
    \(S_i \subseteq E(G_0)\) and \(G_0 \sqsubset \ho \subseteq \cH\),
    we have \(S_i \sqsubset \cH\).
  \end{proofclaim}

  Note that the events $X_i \leq \delta e(\cH)$ and
  $S_i \sqsubset \cH$ are independent, as the first event depends only
  on the sets of \(s\) vertices that are in $\cA_i$ and the second
  event depends only on the sets of $s$ vertices for which a pair of
  vertices is an edge in \(S_i\); since no set \(A \in \cA_i\) can
  have two vertices \(x,y\in A\) with \(xy \in S_i\) (not even with
  $xy \in C_i$), those two events depend on different sets of \(s\)
  vertices.  Therefore, we can bound the probability of the existence
  of $\ho\subseteq \cH$ such that $e(\ho) \geq (1-\delta)e(\cH)$ and
  $G[\ho]\notRamsey$ as follows.
  \begin{align}
    \PP{\exists i \in [t]: X_i \leq \delta e(\cH) \text{ and } S_i \sqsubset \cH}
    & \leq \sum_{i=1}^{t} \PP{X_i \leq \delta e(\cH) \text{ and } S_i \sqsubset \cH} \nonumber \\
    & = \sum_{i=1}^{t} \PP{X_i \leq \delta e(\cH)} \cdot \PP{S_i \sqsubset \cH}.\label{eq:bound-main}
  \end{align}

Note that \(e(\cH) \leq \pp n^{s}/2\) with high
probability.  On the other hand, $X_i$ is a binomial random variable
with the expectation
$\EE{X_i} = \pp |\cA_i| \geq \delta \pp n^{s} \geq 2\delta e(\cH)$.
Therefore, using Chernoff's inequality, we have
\begin{equation}
\PP{X_i \leq \delta e(\cH)}
\leq \PP{X_i \leq \EE{X_i}/2}
\leq \exp\left\{-\delta \pp n^{s}/8 \right\}.\label{eq:bound-xi}
\end{equation}

From Lemma~\ref{lem:quasi}, we have
$\PP{S_i \sqsubset \cH} \leq \pt^{|S_i|}$
for $\pt = \pp \binom{n-2}{s-2}$. Let $m = Dn^{2- 1/m_2(K_{k-1})}$ and
note that from the choice of $C$ we have
$m \leq (D/C)\pp n^{s} \leq \pt n^ 2$. Since $|S_i| \leq m$ for every
$i \in [t]$ and there are at most ${n^2\choose \ell}$ sources $S_i$
with exactly $\ell$ edges, we have
  $$
  \sum_{i=1}^{t} \mathbb{P}[S_i \sqsubset \cH]
  \leq \sum_{i=1}^{t} \pt^{|S_i|}
  \leq \sum_{\ell=1}^{m} \binom{n^2}{\ell} \pt^\ell
  \leq \sum_{\ell=1}^{m} \left(\frac{e\pt n^2}{\ell} \right)^{\ell}.
  $$
  Since $(e\pt n^2/\ell)^{\ell}$ is increasing for
  $\ell \leq \pt n^2$, we may replace $m$ with its upper bound
  $(D/C)\pp n^{s}$ in the above estimation. This together with
  $\pt n^2 \leq \pp n^s$ gives 
\begin{equation}
    \sum_{i=1}^{t} \mathbb{P}[S_i \sqsubset \cH]
    \leq m  \left(\frac{e\pt n^2}{m} \right)^{m}
    \leq n^2 {\left(\frac{eC}{D}\right)}^{(D/C)\pp n^{s}}
    \leq \exp\left(\delta\pp n^{s}/16\right)\label{eq:bound-si},
\end{equation}
where the last inequality follows from the fact that $C$ is
sufficiently large. Finally, using \eqref{eq:bound-xi} and
\eqref{eq:bound-si}, the bound on \eqref{eq:bound-main} becomes
  \begin{align*}
    \PP{\exists i \in [t]: X_i \leq \delta e(\cH) \text{ and } S_i \sqsubset \cH}
    & \leq \exp\left\{-\frac{\delta \pp n^{s}}{16} \right\} = o(1).
  \end{align*}
  Therefore, with high probability, every $\ho\subseteq \cH$ with
  $e(\ho) \geq (1-\delta)e(\cH)$ is such that
  $G[\ho]\rightarrow (K_{s},K_{k-1})$, which finishes the proof.
\end{proof}

\section{$k$-conformal hypergraphs}
\label{sec:hyper}
A hypergraph $\cH$ is \emph{linear} if every pair of
hyperedges of \(\cH\) share at most one vertex. Furthermore, a hypergraph
\(\cH\) is \emph{\(k\)-conformal} if every clique of size exactly $k$ in the
primal graph $G[\cH]$ is contained in a hyperedge of $\cH$. This notion of $k$-conformal hypergraph is inspired by the well-known concept of \emph{conformal hypergraph}, which was introduced by Berge~\cite{Berge}.

Let $\cH$ be a hypergraph and let $S\subseteq V(\cH)$. A family \(\cC =
\{V_1,\ldots,V_\ell\}\) of distinct subsets of \(S\) is a \emph{pair-cover} of
\(S\) if for every \(\{x,y\}\subseteq S\), we have \(\{x,y\}\subseteq V_i\) for
some \(i \in [\ell]\). We say that $\cC = \binom{S}{2}$ is the \emph{perfect}
pair-cover of \(S\). A pair-cover \(\cC\) of \(S\) is \emph{non-trivial} if
\(\cC \neq \{S\}\).
Finally, the \emph{pair-trace} of \(\cH\) on \(S\) is the
family \(\cH_S = \{ E \cap S : E \in E(\cH) \;\text{and}\; |E\cap S|\geq 2\}\).

The next theorem states that with high probability one can obtain a
linear $k$-conformal hypergraph $\ho$ by removing only a small
fraction of the hyperedges of $\hnp$, as long as $\pp$ is much smaller
than a threshold prescribed by the maximum 2-density of $K_{k}$ but
still bigger than $n^{-s}$.

\begin{theorem}\label{k-pair-Helly}
  For all integers $s \geq k \geq 3$, the following holds with high probability for $\cH = \hnp$ when $n^{-s} \ll \pp \ll n^{2-s - 1/m_2(K_k)}$.
  There exists a linear $k$-conformal hypergraph \(\ho \subseteq \cH\) with \((1 - o(1))e(\cH)\) hyperedges.
\label{lemma:cored}
\end{theorem}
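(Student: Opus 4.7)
The plan is to produce $\ho$ from $\cH = \hnp$ via a two-stage deletion argument: first fix linearity, then $k$-conformality, losing only $o(e(\cH))$ hyperedges. For linearity, the expected number of ordered pairs of distinct hyperedges of $\cH$ sharing at least two vertices is $O(n^{2s-2}\pp^2)$, which is $o(\pp n^s) = o(e(\cH))$ because $\pp \ll n^{2-s-1/m_2(K_k)} \leq n^{2-s}$. By Markov, w.h.p.\ at most $o(e(\cH))$ hyperedges lie in such a pair, and deleting one per pair yields a linear $\cH_1 \subseteq \cH$ with $e(\cH_1) \geq (1-o(1))e(\cH)$.

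\textbf{Counting potentially bad $k$-sets.} Call a $k$-set $S \subseteq V(\cH)$ \emph{potentially bad} if $\cH_S$ contains a pair-cover of $S$ whose elements are all proper subsets of $S$; any $k$-set that is bad in some subhypergraph $\cH' \subseteq \cH$ is potentially bad in $\cH$. I bound the expected count $B$ by union bound over minimal non-trivial pair-covers $\cC = \{V_1,\ldots,V_\ell\}$ of $S$ with each $V_i \subsetneq S$: because hyperedges with distinct $S$-traces come from disjoint pools, the events $V_i \in \cH_S$ are independent, so
\[
  \PP{\cC \subseteq \cH_S} \leq \prod_{i=1}^{\ell} \pp\binom{n-k}{s-|V_i|}.
\]
Writing $v_i = |V_i|$ and $\pp = \varepsilon(n)\cdot n^{2-s-1/m_2(K_k)}$ with $\varepsilon(n) = o(1)$, and using $1/m_2(K_k) = 2/(k+1)$, each cover type contributes at most $O_k(\varepsilon^{\ell-1} n^{-f(\cC)})$ to $B/e(\cH)$, where
\[
  f(\cC) = \sum_{i=1}^{\ell} v_i - k - (\ell-1)\cdot\tfrac{2k}{k+1}.
\]
I would prove $f(\cC) \geq 0$ by a cost-per-pair argument: each $V_i$ covers $\binom{v_i}{2}$ pairs of $S$ and contributes $(k+1)v_i - 2k$ to $(k+1)\sum v_i - 2k\ell$, and the inequality $(k+1)v - 2k \geq 2\binom{v}{2}$ holds for $2 \leq v \leq k-1$ since it rearranges to $(v-2)(v-k) \leq 0$. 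Summing over $i$ yields $(k+1)\sum v_i - 2k\ell \geq 2\binom{k}{2} = k(k-1)$, i.e., $f(\cC) \geq 0$. A direct check rules out $\ell \leq 2$, so $\varepsilon^{\ell-1} = o(1)$; summing over the $O_k(1)$ cover types gives $\EE{B} = o(e(\cH))$, and $B = o(e(\cH))$ w.h.p.\ by Markov.

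\textbf{Building $\ho$.} Starting from $\cH_1$, iteratively delete hyperedges: while some bad $k$-set $S$ remains in the current hypergraph, pick a minimal pair-cover of $S$ in the current pair-trace (necessarily with all elements proper subsets of $S$) and delete the unique hyperedge (unique by linearity) whose $S$-trace lies in that cover. Each deletion removes at least one minimal cover of $S$ and pair-traces only shrink under deletion, so $S$ is eliminated after at most $c_k$ deletions, where $c_k$ bounds the number of minimal non-trivial pair-covers of a $k$-set by proper subsets. A $k$-set $S'$ that becomes bad during the procedure must have lost the unique hyperedge containing it, and so was already potentially bad in $\cH_1$; thus every bad $k$-set ever encountered lies in the original potentially-bad family. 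Hence the total number of deletions is at most $c_k B = o(e(\cH))$, and the resulting $\ho$ is linear, $k$-conformal, and satisfies $e(\ho) \geq (1-o(1))e(\cH)$.

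\textbf{Main obstacle.} The decisive step is the enumeration of pair-covers controlling $B$: the exponent $-f(\cC)$ vanishes precisely at the perfect cover $\cC = \binom{S}{2}$, which both pins down the threshold $n^{2-s-1/m_2(K_k)}$ and makes the cost-per-pair inequality the natural route to $f(\cC) \geq 0$; a looser bound on $\PP{\cC \subseteq \cH_S}$ would not leave the $\varepsilon^{\ell-1}$ slack needed to push the sum to $o(1)$.
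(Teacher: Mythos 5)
Your proof is correct and follows the same overall strategy as the paper's: a first-moment bound over non-trivial pair-covers of $k$-sets, a pair-cover inequality pinning the critical exponent at zero, and then deletion, with the linearity step handled separately. The one genuinely different ingredient is your proof of the key inequality. Your $f(\cC)\geq 0$ is equivalent to the paper's Claim~\ref{claim:nice-upper-bound}, $\alpha(\cC)\geq k-2$, since $f(\cC)=\alpha(\cC)-(k-2)$; the paper proves it by an uncrossing chain --- replace each $V_i$ by $\binom{V_i}{2}$, check that $\alpha$ does not increase at each step, and terminate at the perfect cover where equality holds --- whereas your ``cost-per-pair'' weighting establishes it in one stroke from the pointwise inequality $(k+1)v-2k\geq 2\binom{v}{2}$ for $2\leq v\leq k$ and the covering constraint $\sum_i\binom{|V_i|}{2}\geq\binom{k}{2}$. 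Both are elementary; yours avoids the induction, while the paper's makes transparent why the perfect cover is the extremal case. The other difference is bookkeeping in the cleanup: the paper's random variable $X$ counts tuples of hyperedges whose $S$-traces form a non-trivial cover, and it deletes one hyperedge from each counted tuple in a single pass, so $\ho$ automatically has no realizable non-trivial cover; you instead count potentially bad $k$-sets $B$ and run an iterative deletion with a per-set budget $c_k$, which requires the extra (but correct) observation that any $k$-set turning bad mid-procedure was already potentially bad in $\cH$. The one-pass version is a bit simpler, but your accounting is sound, and both give the same $o(e(\cH))$ deletion bound.
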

\begin{proof}[Proof of Theorem~\ref{lemma:cored}]
  Fix integers $s\geq k \geq 3$, let $n$ be sufficiently large and let
  \(\cH = \hnp\) with \(n^{-s} \ll \pp \ll n^{2-s - 1/m_2(K_k)}\).

Let $S\subseteq V(\cH)$ be a set of exactly $k$ vertices. If $S$ is the vertex
set of a clique in the primal graph $G[\cH]$, then there exists a collection
$\cE = \{E_1,\ldots,E_\ell\}$ of hyperedges of $\cH$, with $1\leq \ell \leq
\binom{|S|}{2}$, such that $\cC = \{E_1\cap S,\ldots,E_\ell \cap S\}$ is a
pair-cover of $S$. Furthermore, if $S$ is not contained in any hyperedge of
$\cH$, then such $\cC$ is a non-trivial pair-cover of $S$. Note that we may
assume that $E_i\cap S \neq E_j \cap S$, for every $i\neq j$, since otherwise we
can remove one of the hyperedges $E_i$ or $E_j$ from $\cE$ and still have a
pair-cover of $S$.

Given a non-trivial pair-cover $\cC = \{V_1,\ldots,V_\ell\}$ of $S$, let
$X_{\cC}$ be the random variable that counts the number of collections $\cE =
\{E_1,\ldots,E_\ell\}$ of hyperedges of $\cH$ such that $E_i\cap S = V_i$, for
every $i\in[\ell]$. Note that for each $i\in[\ell]$, the number of possible
choices for $E_i$ is at most $\binom{n - |S|}{s - |V_i|}$. Therefore, \begin{align*}
  \EE{X_{\cC}} = \prod_{i=1}^{\ell}p \binom{n - |S|}{s - |V_i|}
  \leq p^\ell n^{\sum_{i=1}^{\ell} (s - |V_i|)}
  = (pn^{s-2})^{\ell} n^{-\sum_{i=1}^{\ell} (|V_i|-2)}.
\end{align*}
Using that $p \ll n^{2-s - 1/m_2(K_k)}$, we have
\begin{align}\label{eq:expectation-upperbound}
  \EE{X_{\cC}}
  \ll (pn^{s-2}) \cdot n^{-\frac{\ell-1}{m_2(K_k)} - \sum_{i=1}^{\ell} (|V_i| - 2)}
  = (pn^{s-2}) \cdot n^{-\alpha(\cC)},
\end{align}
where we define
$\alpha(\cC) = \frac{\ell-1}{m_2(K_k)} + \sum_{i=1}^{\ell} (|V_i| -
2)$.

Note that if $\cC$ is the perfect pair-cover of $S$, then
$\alpha(\cC) = k-2$.  The next claim shows that this is in fact a
lower bound for $\alpha(\cC)$ for any non-trivial pair-cover $\cC$ of $S$.

\begin{claim}\label{claim:nice-upper-bound}
  For any non-trivial pair-cover $\cC = \{V_1,\dots,V_\ell\}$ of $S$, we have
  \begin{align*}
    \alpha(\cC) \geq k-2.
  \end{align*}
\end{claim}
\begin{proofclaim}[Proof of Claim~\ref{claim:nice-upper-bound}]
Let $\cC = \{V_1,\dots,V_\ell\}$ be a non-trivial pair-cover of $S$. Without lost of
generality, we may assume that \(|V_1| \geq |V_2| \geq \cdots \geq |V_\ell|\geq
2\). Let $\cC_0 = \cC$ and for each $i\in[\ell]$, inductively define $\cC_i =
(\cC_{i-1} \setminus \{V_i\}) \cup \binom{V_i}{2}$. Note that each $\cC_i$ is a
pair-cover of $S$ and that \(|\cC_i| = |\cC_{i-1}| + \binom{|V_i|}{2} - 1\).
Furthermore, $\cC_{\ell}$ is the perfect pair-cover of $S$. We will show that
$\alpha(\cC_i) \leq \alpha(\cC_{i-1})$ for every $i\in[\ell]$. Indeed, we have
\begin{align*}
  \alpha(\cC_i)
  & = \frac{|\cC_i|-1}{m_2(K_k)} + \sum_{V \in \cC_i} (|V| - 2) \\
  & = \frac{|\cC_{i-1}|-1}{m_2(K_k)} + \sum_{V \in \cC_{i-1}} (|V| - 2) + \frac{\binom{|V_i|}{2} - 1}{m_2(K_k)} - (|V_i| - 2)  \\
  & \leq \alpha(\cC_{i-1}),
\end{align*}
where in the last inequality we used the fact that
$m_2(K_k) \geq \frac{\binom{t}{2}-1}{t-2}$, for any $3 \leq t \leq
k$. Therefore, we conclude that
\begin{align*}
  \alpha(\cC) = \alpha(\cC_0) \geq \alpha(\cC_1) \geq \cdots \geq \alpha(\cC_\ell) = \frac{\binom{|S|}{2}-1}{m_2(K_k)} = k-2,
\end{align*}
finishing the proof.
\end{proofclaim}

Let $X$ be the random variable that counts the number of sets $\cE =
\{E_1,\ldots,E_\ell\}$ of hyperedges of $\cH$ that induce a non-trivial
pair-cover of a set $S$ of size exactly $k$. Note that $X$ can be expressed as a
sum of $X_{\cC}$ over all non-trivial pair-covers $\cC$ of all sets $S$ of size
at most $k$. There are $O(n^k)$ choices for the set $S$ and at most
$2^{k^3}=O(1)$ non-trivial pair-covers of $S$. Therefore,
by~\eqref{eq:expectation-upperbound} and Claim~\ref{claim:nice-upper-bound}, we
have
\begin{align}\label{eq:EX}
    \EE{X}
    \ll n^{k} \cdot  \pp n^{s - 2} \cdot n^{-(k-2)}
    = pn^s.
\end{align}

Now, let $Y$ be the number of pairs of hyperedges in $\cH$ sharing at least two
vertices. Since $\pp \ll n^{2-s}$, we have
  \begin{equation}\label{eq:EY}
    \EE{Y} \le \binom{n}{s}\binom{s}{2}\binom{n-2}{s-2} \pp^2
    \leq s^2 \pp^2 n^{2s-2}\ll pn^s.
  \end{equation}

Finally, since $pn^s\gg 1$, a simple application of Chernoff's inequality gives
that with high probability we have \(e(\cH) = (1 \pm o(1)) \pp \binom{n}{s}\).
From Markov's inequality, we conclude from (\ref{eq:EX}) and (\ref{eq:EY}) that
with high probability $X \ll e(\cH)$ and $Y \ll e(\cH)$. Therefore, by removing
one hyperedge from every set of hyperedges that induces a non-trivial pair-cover
$\cC$ of $S$ counted by $X$ and removing one hyperedge from every pair of
hyperedges counted by $Y$, we obtain a linear $k$-conformal hypergraph $\ho
\subseteq \cH$ that contains $(1-o(1))e(\cH)$ hyperedges.
\end{proof}

As a corollary of Theorem~\ref{k-pair-Helly}, we obtain the following
result.

\begin{theorem}\label{thm:part2}%
Let $k \geq 3$ and $s = R(k)-1$. Then the following holds with high probability
for $\cH = \hnp$ when $n^{-s} \ll \pp \ll n^{2-s - 1/m_2(K_k)}$.
There exists a subhypergraph $\ho \subseteq \cH$ with \(e(\ho) =
(1-o(1))e(\cH)\) such that
    \begin{align}
          G[\ho]\nrightarrow K_k.
    \end{align}
\end{theorem}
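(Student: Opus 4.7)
The plan is to derive Theorem~\ref{thm:part2} directly from Theorem~\ref{k-pair-Helly} by exhibiting an explicit $2$-colouring of $G[\ho]$ that avoids monochromatic $K_k$. Since the hypothesis on $\pp$ matches the one in Theorem~\ref{k-pair-Helly}, I would start by applying that theorem to obtain, with high probability, a linear $k$-conformal subhypergraph $\ho \subseteq \cH$ with $e(\ho) = (1-o(1))e(\cH)$. It then remains to show deterministically that for any linear $k$-conformal $s$-uniform hypergraph $\ho$, we have $G[\ho] \nrightarrow K_k$.

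The key structural observation is that because $\ho$ is linear, any two hyperedges of $\ho$ share at most one vertex, so the $K_s$-cliques arising from distinct hyperedges of $\ho$ are pairwise edge-disjoint in $G[\ho]$. Consequently, $E(G[\ho])$ decomposes as the disjoint union, over $E \in E(\ho)$, of the edge sets of the cliques $K_E \subseteq G[\ho]$ induced by $E$. Since $s = R(k)-1$, we have $K_s \nrightarrow K_k$, and so we may fix a red-blue colouring $\chi$ of $E(K_s)$ with no monochromatic $K_k$. For each $E \in E(\ho)$, we choose an arbitrary bijection between $E$ and $V(K_s)$ and colour $E(K_E)$ by transporting $\chi$ along this bijection; by edge-disjointness this yields a well-defined red-blue colouring of $E(G[\ho])$.

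Now I would use $k$-conformality to finish: any copy of $K_k$ in $G[\ho]$ is, by definition, contained in some hyperedge $E \in E(\ho)$, so all of its edges lie inside the single $K_E$, where they inherit the colouring $\chi$. By the choice of $\chi$, this copy of $K_k$ is not monochromatic. Hence the colouring constructed above has no monochromatic $K_k$, which gives $G[\ho] \nrightarrow K_k$ as required.

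I do not foresee a substantial obstacle at this stage, since all the probabilistic and extremal content has already been absorbed into Theorem~\ref{k-pair-Helly}; the only point that needs care is checking that linearity really is enough to guarantee that the per-hyperedge copies of $\chi$ are mutually compatible, which is immediate because two hyperedges sharing at most one vertex contribute disjoint edge sets to $G[\ho]$.
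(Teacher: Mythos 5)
Your proposal is correct and follows the same argument as the paper: apply Theorem~\ref{k-pair-Helly} to obtain a linear $k$-conformal $\ho$, use linearity to patch together per-hyperedge $K_k$-free colourings of $K_s$ (which exist since $s=R(k)-1$), and use $k$-conformality to conclude that every $K_k$ in $G[\ho]$ lies inside one hyperedge and is therefore not monochromatic. The only difference is cosmetic: you spell out the edge-disjointness of the clique edge sets, while the paper states more tersely that linearity makes the colouring well-defined.
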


\begin{proof}
  Let $\ho \subseteq \cH$ be the linear $k$-conformal hypergraph
  obtained in Theorem~\ref{k-pair-Helly} and let $G=G[\ho]$. To verify that
  $G \nrightarrow K_k$, we colour the edges of $G$ as
  follows: for each hyperedge $E \in E(\ho)$, since $|E|=s=R(k)-1$, we can colour all the edges of $G$ contained in $E$ in a way that there is no monochromatic copy of $K_k$. Considering that $\ho$ is linear,
  every edge of $G$ belongs to exactly one hyperedge of $\ho$ and
  hence this colouring is well-defined.
  Now, since $\ho$ is $k$-conformal, every set of $k$ vertices that induces
  a copy of $K_k$ must be contained in some hyperedge of $\ho$ and it cannot be monochromatic.
  Therefore, $G \nrightarrow K_k$, as desired.
\end{proof}

\section{Proof of Theorem~\ref{thm:main}}
\label{sec:proof}

In this short section we combine Theorems~\ref{thm:part1}
and~\ref{thm:part2} to prove our main result.
\begin{proof}[Proof of Theorem~\ref{thm:main}]
  Let $k \geq 3$ be an integer and $s = R(k)-1$. Consider $\pp$ such that $n^{2-s-1/m_2(K_{k-1})} \ll \pp \ll n^{2-s - 1/m_2(K_k)}$ and let $\cH = \hnp$.
  By Theorem~\ref{thm:part1}, with high probability, every subhypergraph $\ho \subseteq \cH$ with \(e(\ho) = (1-o(1))e(\cH)\) satisfies
  \begin{align}
    G[\ho]\rightarrow(K_{s},K_{k-1}).
    \label{eq:H0a}
  \end{align}
  On the other hand, by Theorem~\ref{thm:part2}, with high probability
  there exists a subhypergraph $\ho \subseteq \cH$ with
  \(e(\ho) = (1-o(1))e(\cH)\) such that
  \begin{align}
    G[\ho]\nrightarrow K_k.
    \label{eq:H0b}
  \end{align}
  Since both events can occur with high probability, there exists a
  hypergraph $\ho$ such that both~\eqref{eq:H0a}
  and~\eqref{eq:H0b} hold. Therefore, $G[\ho]$ is the desired graph.

\end{proof}

\section{Concluding Remarks}
\label{sec:conc}

In this work, we constructed, for every integer $k \geq 3$, a graph
$G$ such that $G \nrightarrow K_k$, but
$G \rightarrow (K_{s},K_{k-1})$, for $s=R(k)-1$. Our approach combines
probabilistic techniques with hypergraph containers to obtain
``pseudorandom'' host graphs that exhibit some particular Ramsey
behavior. This way one can encode the construction of $G$ through a
random $s$-uniform hypergraph $\cH$, obtained by creating copies of
$K_s$ for each hyperedge of $\cH$, which is carefully pruned to
eliminate certain configurations that could otherwise lead to
monochromatic copies of $K_k$. The resulting graph $G$ simultaneously
avoids monochromatic copies of $K_k$ in some colouring while forcing
either a red copy of $K_s$ or a blue copy of $K_{k-1}$ in any red-blue
colouring of the edges of $G$.

There are several directions for future work. It would be interesting
to find deterministic constructions of such graphs, or to impose
additional structural constraints such as ``bounded'' degree or
forbidding some subgraphs. More broadly, a natural question is to
determine for which values of $k$ the inequality $R(k{-}1,k{+}1) <
R(k)$ is strict, and whether methods similar to ours can help
characterizing more generally when asymmetric pairs are not
Ramsey-equivalent to the corresponding diagonal pair.

It is possible to adapt our proof to obtain the following
generalization of Theorem~\ref{thm:main} by considering a linear
$k$-conformal subhypergraph of $\cH_{s}(n,p)$, by choosing
$n^{2-s-1/m_2(K_{\ell-1})} \ll p \ll n^{2-s-1/m_2(K_{\ell})}$.
\begin{theorem}\label{thm:generalization}
  For any integers $k \ge \ell \ge 3$, there exists a graph $G$ such
  that $G \nrightarrow (K_k,K_{\ell})$ and
  $G \rightarrow (K_{s},K_{\ell-1})$ for $s\leq R(k,\ell)-1$.
\end{theorem}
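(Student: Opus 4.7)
The plan is to mirror the proof of Theorem~\ref{thm:main} step by step, replacing $K_{k-1}$ by $K_{\ell-1}$ on the positive side and $K_k$ by $K_\ell$ on the negative side. Fix $k \geq \ell \geq 3$ and take $s = R(k,\ell)-1$; a single construction then handles all smaller $s$, since $G \rightarrow (K_s,K_{\ell-1})$ immediately implies $G \rightarrow (K_{s'},K_{\ell-1})$ for every $s' \leq s$. Choose $\pp$ in the window
\[
n^{2-s-1/m_2(K_{\ell-1})} \ll \pp \ll n^{2-s-1/m_2(K_\ell)},
\]
which is non-empty because $m_2(K_{\ell-1}) < m_2(K_\ell)$. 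The desired graph will be $G[\ho]$ for a carefully chosen subhypergraph $\ho$ of $\cH = \hnp$.

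For the positive direction I would reprove Theorem~\ref{thm:part1} line by line, invoking the container lemma with $F = K_{\ell-1}$ and Fact~\ref{thm_RS} with parameters $(s,\ell-1)$. This is legitimate because $s \geq \ell > \ell-1$ (using $R(k,\ell) \geq R(\ell,\ell) \geq \ell+1$ for $\ell \geq 3$), so supersaturation still yields many red copies of $K_s$ outside each container or many blue copies of $K_{\ell-1}$ inside it. The source-size exponent becomes $2-1/m_2(K_{\ell-1})$, matched by the lower threshold on $\pp$, and every remaining probabilistic estimate in that proof transfers unchanged.

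For the negative direction I would adapt Theorem~\ref{lemma:cored} to produce, with high probability, a linear \emph{$\ell$-conformal} subhypergraph $\ho \subseteq \cH$ retaining $(1-o(1))e(\cH)$ hyperedges. The first-moment computation keeps exactly the same shape, with $k$ replaced by $\ell$ throughout the counting of $\ell$-sets $S$ and their non-trivial pair-covers; the key step in the analog of Claim~\ref{claim:nice-upper-bound} becomes the inequality $m_2(K_\ell) \geq (\binom{t}{2}-1)/(t-2)$ for $3 \leq t \leq \ell$, which is exactly the definition of $m_2(K_\ell)$. Under the upper bound $\pp \ll n^{2-s-1/m_2(K_\ell)}$, the expected number of bad $\ell$-sets and of hyperedge pairs sharing two or more vertices is $o(\pp n^s) = o(e(\cH))$, so both can be removed.

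Given such $\ho$, I would colour the edges of $G[\ho]$ by choosing, inside each hyperedge $E$ (a copy of $K_s$ with $s = R(k,\ell)-1$), a red-blue colouring of its $\binom{s}{2}$ edges that contains no red $K_k$ and no blue $K_\ell$; linearity makes these local colourings glue into a well-defined global colouring. By $\ell$-conformality every $K_\ell$ in $G[\ho]$ lies inside some hyperedge, so no blue $K_\ell$ arises. A short lemma — which I view as the only genuinely new ingredient — shows that in a linear hypergraph $\ell$-conformality already implies $k$-conformality for $k \geq \ell \geq 3$: given a $K_k$ on vertex set $S$, any $\ell$-subset $T \subseteq S$ lies in some hyperedge $E$, and for each $v \in S \setminus T$ and each $(\ell-1)$-subset $T' \subseteq T$ the $\ell$-set $T' \cup \{v\}$ sits in a hyperedge $E'$ with $|E \cap E'| \geq \ell-1 \geq 2$, forcing $E' = E$ by linearity, hence $v \in E$. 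This rules out red $K_k$'s as well, yielding $G[\ho] \nrightarrow (K_k,K_\ell)$, and intersecting the two high-probability events produces the graph required by Theorem~\ref{thm:generalization}. The main conceptual obstacle is recognising that the correct strengthening to target is \emph{$\ell$-conformality} (with threshold $n^{2-s-1/m_2(K_\ell)}$) rather than $k$-conformality, because it is the smaller blue clique $K_\ell$ that could otherwise sneak across several hyperedges.
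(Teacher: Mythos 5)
Your proposal is correct and follows the same probabilistic-plus-containers blueprint the paper sketches in its concluding remark, but you have also caught and repaired a real imprecision in that remark. The paper says to take a linear \emph{$k$-conformal} subhypergraph; as you observe, $k$-conformality alone only guarantees that $k$-cliques of the primal graph lie inside a single hyperedge, so it blocks red copies of $K_k$ but says nothing a priori about blue copies of $K_\ell$, which could span several hyperedges. The correct single condition to aim for is \emph{$\ell$-conformality}, and your auxiliary lemma -- that in a \emph{linear} hypergraph, $\ell$-conformality (with $\ell\ge 3$) implies $m$-conformality for every $m\ge\ell$, because any $(\ell-1)$-subset of a covered $\ell$-set shares $\ell-1\ge 2$ vertices with the covering hyperedge and linearity then forces all the covering hyperedges to coincide -- is exactly the missing glue. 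With that lemma in hand, the rest of your argument is a faithful transcription of Theorems~\ref{thm:part1} and~\ref{lemma:cored}: Theorem~\ref{thm:part1} with $F=K_{\ell-1}$ and Fact~\ref{thm_RS} with parameters $(s,\ell-1)$ (legitimate since $s=R(k,\ell)-1\ge\ell>\ell-1$), Theorem~\ref{lemma:cored} with $\ell$ in place of $k$ (legitimate since $s\ge\ell\ge3$ and the identity $m_2(K_\ell)=\bigl(\binom{\ell}{2}-1\bigr)/(\ell-2)$ makes Claim~\ref{claim:nice-upper-bound} go through verbatim), and the same choice of window $n^{2-s-1/m_2(K_{\ell-1})}\ll \pp\ll n^{2-s-1/m_2(K_\ell)}$, which is non-empty because $m_2(K_{\ell-1})<m_2(K_\ell)$. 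Also correct is the reduction to $s=R(k,\ell)-1$ via monotonicity in the red clique. In short, the proof is right, and compared with the paper's one-line remark it supplies the one genuinely new ingredient needed: the identification of $\ell$-conformality as the target and the linearity lemma upgrading it to $k$-conformality.
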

We propose the following conjecture as a variation of the previous
theorem for three colours.
\begin{conjecture}\label{conj:first}
  For any integers $k \ge \ell \ge 2$, there exists a graph $G$ such
  that $G \nrightarrow (K_k,K_{\ell})$ and
  $G \rightarrow (K_{k-1},K_{k-1},K_{\ell})$.
\end{conjecture}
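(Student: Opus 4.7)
The plan is to mimic the template of Theorem~\ref{thm:main} with two adjustments: the Ramsey/supersaturation half is applied to the asymmetric pair $(K_s,K_{\ell-1})$ instead of $(K_s,K_{k-1})$, and the structural clean-up is strengthened to produce a linear \emph{$\ell$-conformal} subhypergraph rather than a $k$-conformal one. Fix $k\geq\ell\geq 3$ and $s\leq R(k,\ell)-1$, and pick $p=p(n)$ with $n^{2-s-1/m_2(K_{\ell-1})}\ll p\ll n^{2-s-1/m_2(K_\ell)}$; this interval is nonempty because $m_2(K_t)=(t+1)/2$ is strictly increasing in $t$. Let $\cH=\hnp$.

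First, I would re-run the proof of Theorem~\ref{thm:part1} with $F=K_{\ell-1}$ playing the role of $K_{k-1}$ and the Ramsey supersaturation for the pair $(K_s,K_{\ell-1})$ (a standard consequence of $R(s,\ell-1)$ being finite) in place of Fact~\ref{thm_RS}. The container sources $S_i$ now have size at most $Dn^{2-1/m_2(K_{\ell-1})}\ll pn^s$, so the bookkeeping in (\ref{eq:bound-xi})--(\ref{eq:bound-si}) transcribes essentially verbatim, and one obtains with high probability that every $\ho\subseteq\cH$ with $e(\ho)\geq(1-\delta)e(\cH)$ satisfies $G[\ho]\rightarrow(K_s,K_{\ell-1})$.

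In parallel, I would re-run the argument of Theorem~\ref{k-pair-Helly} with every occurrence of $k$ replaced by $\ell$: the random variable $X$ now counts collections of hyperedges forming a non-trivial pair-cover of an $\ell$-subset, and the analogue of Claim~\ref{claim:nice-upper-bound} gives $\alpha(\cC)\geq\ell-2$ using exactly the same monotonicity $m_2(K_\ell)\geq(\binom{t}{2}-1)/(t-2)$ for $3\leq t\leq\ell$. Combined with $Y\ll pn^s$ and Chernoff concentration of $e(\cH)$, Markov's inequality delivers with high probability a linear $\ell$-conformal $\ho\subseteq\cH$ with $(1-o(1))e(\cH)$ hyperedges.

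To finish, I would verify that any such $\ho$ already yields $G[\ho]\nrightarrow(K_k,K_\ell)$. Since $s\leq R(k,\ell)-1$, for each $E\in E(\ho)$ fix a red-blue colouring of $K_E$ with no red $K_k$ and no blue $K_\ell$; by linearity these local colourings share no edges and paste into a single colouring of $G[\ho]$. A blue $K_\ell$ in $G[\ho]$ is an $\ell$-clique in the primal graph, hence in a single hyperedge by $\ell$-conformality, contradicting the choice of that hyperedge's colouring. For a putative red $K_k$ on a $k$-set $T$, one only has $\ell$-conformality, not $k$-conformality directly; however, for $\ell\geq 3$ any two $\ell$-subsets of $T$ adjacent in the Johnson graph $J(k,\ell)$ share $\ell-1\geq 2$ vertices and so, by linearity, belong to the same hyperedge, and the connectivity of $J(k,\ell)$ forces all $\ell$-subsets of $T$, and hence $T$ itself, to sit inside one hyperedge $E$---again contradicting that colouring's choice. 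The main obstacle is precisely this last propagation step: the hypothesis $\ell\geq 3$ is exactly what is needed for $\ell$-conformality plus linearity to imply the effective $k$-conformality on cliques of $G[\ho]$; everything else is a parameter-for-parameter translation of the arguments already present in the paper.
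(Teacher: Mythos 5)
The statement you have been asked to examine is \emph{Conjecture}~\ref{conj:first}, which the paper explicitly leaves open. Your argument does not address it: what you have reconstructed is a proof of Theorem~\ref{thm:generalization}, namely that there is a graph $G$ with $G\nrightarrow(K_k,K_\ell)$ and $G\rightarrow(K_s,K_{\ell-1})$ for $s\leq R(k,\ell)-1$. That is a two-colour asymmetric statement, whereas the conjecture asserts a \emph{three-colour} Ramsey property, $G\rightarrow(K_{k-1},K_{k-1},K_\ell)$. Nowhere in your write-up do three colours, the Johnson-connectivity trick notwithstanding, actually appear; your container and supersaturation step is run entirely for the pair $(K_s,K_{\ell-1})$, and your final verification is a two-colouring argument. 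So you have solved a different problem than the one posed.

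It is also worth noting that the two-colour result does \emph{not} imply the conjecture by any colour-merging shortcut. If one merges the first two colours and applies $G\rightarrow(K_s,K_{\ell-1})$, one either gets a $K_{\ell-1}$ in the third colour (too small to be the required $K_\ell$) or a $K_s$ in the merged colour; to extract a $K_{k-1}$ in one of the two original colours from that clique, one would need $s\geq R(k-1)$, but $s=R(k,\ell)-1$ can be far smaller than $R(k-1)$ (for $\ell=3$ fixed and $k$ large, $R(k,3)$ is polynomial while $R(k-1)$ is exponential). Merging the other way gives a $K_{\ell-1}$ in the merged pair, which is again of the wrong size. This mismatch is exactly why the paper states the three-colour statement as a conjecture rather than a theorem: the threshold windows for $p$ that make the container half and the conformality half both work out in the two-colour case do not straightforwardly translate to a three-colour supersaturation argument, and a new idea is needed. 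Your transcription of the existing machinery, while largely careful (the observation that linearity plus $\ell$-conformality propagates to $k$-cliques via connectivity of $J(k,\ell)$ is a nice way to phrase the paper's remark), does not touch that gap.
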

Note that the case $\ell = 2$ of the above conjecture is precisely the
result of Ne\v{s}et\v{r}il and R\"odl (Theorem~\ref{thm_NR}). We
conclude proposing the following conjecture that relates to
Conjecture~\ref{conj:first} in the same way that
Theorem~\ref{thm:generalization} relates to Theorem~\ref{thm_NR}.
\begin{conjecture}
  For any integers $k \ge \ell \ge 2$, there exists a graph $G$ such
  that $G \nrightarrow (K_k,K_k,K_{\ell})$, but
  $G \rightarrow (K_{k+1},K_{k-1},K_{\ell})$.
\end{conjecture}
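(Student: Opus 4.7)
The plan is to lift the argument of Theorem~\ref{thm:main} to the three-color setting. I would set $s = R(k,k,\ell) - 1$ and fix a 3-coloring $\chi^*$ of $K_s$ with no red $K_k$, no blue $K_k$, and no green $K_\ell$. Taking $\cH = \hnp$ with $p$ in a range to be determined, the first step is to isolate a subhypergraph $\ho \subseteq \cH$ with $(1-o(1))e(\cH)$ hyperedges that is simultaneously linear, $k$-conformal, and $\ell$-conformal. The $k$-conformal part comes from Theorem~\ref{k-pair-Helly} verbatim; adding $\ell$-conformality would require running the pair-cover expectation bound of Claim~\ref{claim:nice-upper-bound} with $\ell$ in place of $k$, which succeeds provided $p \ll n^{2-s-1/m_2(K_\ell)}$.

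For the non-Ramsey side, I would color each hyperedge of $\ho$ by $\chi^*$. Linearity makes this well-defined on $G[\ho]$; $k$-conformality forces every $K_k$ of $G[\ho]$ to sit inside a single hyperedge, and $\ell$-conformality does the same for every $K_\ell$. Hence no red or blue $K_k$ and no green $K_\ell$ can appear, yielding $G[\ho] \nrightarrow (K_k,K_k,K_\ell)$.

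For the Ramsey side, I would run the argument of Theorem~\ref{thm:part1} with a single application of the container lemma to $F = K_{k-1}$, obtaining sources and containers $(S_i, C_i)$. For each $i$, Fact~\ref{thm_RS} with parameters $(s, k-1)$ provides at least $\delta n^s$ copies of $K_s$ whose edge sets avoid $C_i$; I would collect their vertex sets into $\cA_i$. In any 3-coloring of $G[\ho]$ with no red $K_{k+1}$, no blue $K_{k-1}$, and no green $K_\ell$, the blue edge set is $K_{k-1}$-free and thus lies inside some $C_i$ with $S_i \sqsubset \ho$. Every hyperedge $E \in \ho$ with $E \in \cA_i$ is then blue-free, so it carries a red/green 2-coloring of $K_s$; provided $s \geq R(k+1,\ell)$, this forces a red $K_{k+1}$ or a green $K_\ell$, contradicting the assumption. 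Hence $\cA_i \cap E(\ho) = \emptyset$ for some $i$, and a Chernoff plus source-bound computation as in Theorem~\ref{thm:part1} would show this happens with probability $o(1)$.

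The hard part will be reconciling the two $p$-ranges. Using $m_2(K_t) = (t+1)/2$, the $\ell$-conformal upper bound $p \ll n^{2-s-1/m_2(K_\ell)}$ and the container lower bound $p \gg n^{2-s-1/m_2(K_{k-1})}$ leave a nonempty window only when $m_2(K_\ell) > m_2(K_{k-1})$, i.e., $\ell \geq k$, which forces $\ell = k$ in view of the hypothesis $\ell \leq k$. The degenerate case $\ell = 2$ is also covered, since $\ell$-conformality becomes trivial and the conjecture reduces to Theorem~\ref{thm:main}. For $3 \leq \ell \leq k-1$ the plan collapses and a genuinely new ingredient seems necessary---for instance, a more carefully chosen template $\chi^*$ whose green edges sit inside a globally $K_\ell$-free subgraph (so that $\ell$-conformality can be dropped), or a non-uniform hypergraph model that decouples the conformality and container thresholds. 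A smaller, mostly technical obstacle is the inequality $R(k,k,\ell) - 1 \geq R(k+1,\ell)$ needed to apply Fact~\ref{thm_RS}; it is almost certainly true but ties in with the asymmetric-Ramsey gap question raised in Section~\ref{sec:conc}.
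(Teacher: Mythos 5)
The statement you have set out to prove is a conjecture in the paper, left open by the authors; there is no paper proof to compare your proposal against. What can be assessed is whether your proposal would actually close the conjecture, and by your own admission it does not.

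Your diagnosis of the obstruction is accurate. Making the pruned hypergraph $\ho$ simultaneously linear, $k$-conformal and $\ell$-conformal forces $p \ll n^{2-s-1/m_2(K_\ell)}$ (running the pair-cover computation of Claim~\ref{claim:nice-upper-bound} with $K_\ell$ in place of $K_k$), whereas the container argument on the Ramsey side needs $p \gg n^{2-s-1/m_2(K_{k-1})}$. Since $m_2(K_t)=(t+1)/2$ is increasing in $t$, these two windows overlap only when $m_2(K_\ell)>m_2(K_{k-1})$, that is $\ell\ge k$, which together with $k\ge\ell$ forces $\ell=k$. Combined with the degenerate case $\ell=2$ (where $K_2$-conformality is vacuous, and the conjecture reduces to $G\nrightarrow K_k$ together with $G\rightarrow(K_{k+1},K_{k-1})$, which follows from Theorem~\ref{thm:main} once one checks $R(k)-1\ge k+1$), your plan only reaches the two boundary values $\ell\in\{2,k\}$; the whole range $3\le\ell\le k-1$ remains untouched, exactly as you note. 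The remedies you gesture at (a template $\chi^*$ whose green class is globally $K_\ell$-free so that $\ell$-conformality can be dropped, or a non-uniform hypergraph model that decouples the conformality and container thresholds) are reasonable places to look, but as stated they are speculative and not worked out.

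Two smaller points deserve attention even in the $\ell=k$ case. First, to conclude $S_i\sqsubset\ho$ it is not enough to observe that the blue colour class is $K_{k-1}$-free and hence trapped in some container $C_i$: you should mirror the proof of Theorem~\ref{thm:part1} and build a graph $G_0$ with exactly one blue edge $e_A$ chosen from each hyperedge $A\in E(\ho)$, which exists because a hyperedge with no blue edge is a red/green-coloured $K_s$ and $s\ge R(k+1,\ell)$ would then force a forbidden red $K_{k+1}$ or green $K_\ell$ inside it; the reason the detour through $G_0$ matters is that the definition of $\sqsubset$ requires \emph{distinct} hyperedges, which the one-edge-per-hyperedge construction guarantees but the bare inclusion $S_i\subseteq\{\text{blue edges}\}$ does not. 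Second, the inequality $R(k,k,\ell)-1\ge R(k+1,\ell)$ that the argument hinges on should be proved rather than assumed; it is straightforward for $\ell=2$ and $\ell=k$, but the general statement is part of the Ramsey-gap territory the authors themselves flag as open in Section~\ref{sec:conc}.
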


\subsection*{Acknowledgements.}
We would like to thank Yoshiharu Kohayakawa for helpful discussions
and for his comments on an earlier version of this manuscript.

\printbibliography
\end{document}